\numberwithin{equation}{section}
\newtheorem{theorem}[equation]{Theorem}
\newtheorem{proposition}[equation]{Proposition}
\newtheorem{lemma}[equation]{Lemma}
\newtheorem{corollary}[equation]{Corollary}
\theoremstyle{definition}
\newtheorem{rmk}[equation]{Remark}
\newenvironment{remark}[1][]{\begin{rmk}[#1] \pushQED{\qed}}{\popQED \end{rmk}}
\newtheorem{example}[equation]{Example}
\newtheorem{defn}[equation]{Definition}
\newenvironment{definition}[1][]{\begin{defn}[#1]\pushQED{\qed}}{\popQED \end{defn}}
\newenvironment{subeqns}[1][]{\addtocounter{equation}{-1}
\begin{subequations}

}{\end{subequations}}
\newcommand{\cA}{\mathcal{A}}
\newcommand{\cB}{\mathcal{B}}
\newcommand{\bF}{\mathbf{F}}
\newcommand{\rH}{\mathrm{H}}
\newcommand{\cI}{\mathcal{I}}
\newcommand{\cL}{\mathcal{L}}
\newcommand{\bP}{\mathbf{P}}
\newcommand{\bS}{\mathbf{S}}
\newcommand{\cV}{\mathcal{V}}
\newcommand{\sV}{\mathscr{V}}
\newcommand{\bZ}{\mathbf{Z}}
\newcommand{\bd}{\mathbf{d}}
\newcommand{\be}{\mathbf{e}}
\newcommand{\bk}{\mathbf{k}}
\renewcommand{\phi}{\varphi}
\renewcommand{\emptyset}{\varnothing}
\newcommand{\arxiv}[1]{\href{http://arxiv.org/abs/#1}{{\tt arXiv:#1}}}
\renewcommand{\hom}{\operatorname{Hom}}
\DeclareMathOperator{\Sym}{Sym}
\DeclareMathOperator{\Tor}{Tor}
\newcommand{\SL}{\mathbf{SL}}
\newcommand{\Sec}{\mathrm{Sec}}
\newcommand{\multi}[2]{(\bZ_{\ge 0}^{#1})_{#2}}
\newcommand{\df}[1]{{\bf \textsf{#1}}}
\title[Syzygies of bounded rank symmetric tensors]{Syzygies of bounded rank symmetric tensors \\are generated in bounded degree}
\date{November 21, 2016}
\author{Steven V Sam}
\address{Department of Mathematics, University of Wisconsin, Madison, WI}
\email{\href{mailto:svs@math.wisc.edu}{svs@math.wisc.edu}}
\urladdr{\url{http://math.wisc.edu/~svs/}}
\thanks{SS was partially supported by NSF DMS-1500069 and Iuventus  Plus  grant
0301/IP3/2015/73 of the Polish Ministry of Science.}
\subjclass[2010]{%
13D02, %Syzygies, resolutions, complexes
13E05, %Noetherian rings and modules
14M99. %None of the above, but in this section (Special varieties)
}
\begin{document}

\maketitle

\begin{abstract}
We study the syzygies of secant ideals of Veronese subrings of a fixed commutative graded algebra over a field of characteristic $0$. One corollary is that the degrees of the minimal generators of the $i$th syzygy module of the coordinate ring of the $r$th secant variety of any Veronese embedding of a projective scheme $X$ can be bounded by a constant that only depends on $i$, $r$, and $X$, and not on the choice of the Veronese embedding.
\end{abstract}

\section{Introduction}

In \cite{secver}, we studied the ideal of the $r$th secant variety of the $d$th Veronese embedding of a projective space (over a field of characteristic $0$), and proved the existence of a bound on the degrees of its minimal generators which is independent of $d$ (but depends on $r$, as it must). In fact, the ideal of a secant variety has a purely algebraic definition, and so makes sense for any $\bZ_{\ge 0}$-graded commutative ring $B$ (the projective space case is when $B$ is a polynomial ring) and we also proved analogous results in this level of generality. The basic idea in \cite{secver} is to recast the existence of these bounds to showing that a particular ideal in a new algebraic structure $\cB$ is finitely generated. Ultimately, one shows that all ideals in $\cB$ are finitely generated, i.e., $\cB$ is noetherian.

A natural question is if all finitely generated modules over $\cB$ are noetherian; however, from the perspective of \cite{secver} it is not clear how to define modules. Our first goal in the current paper is to translate  \cite{secver} into the language of functor categories in the spirit of \cite{fi-modules, catgb}. This has the downside of being more abstract, but can handle arbitrary modules with ease, rather than just ideals. An immediate application of our expanded toolkit is that we can study free resolutions of secant ideals (for some reasons of interest, see \cite{oeding}). In particular, we show that the $i$th syzygy module of the coordinate ring of the $r$th secant variety of the $d$th Veronese embedding of projective space (whose space of minimal generators is the $i$th Tor group with the residue field) is generated in bounded degree where the bound is independent of $d$ (see Corollary~\ref{cor:sec-syz}). The case $i=1$ corresponds to the results of \cite{secver} mentioned above. Furthermore, our techniques show that there are operators acting on these Tor groups in a way which makes them finitely generated (see Remark~\ref{rmk:delta-mod}) when considered all at once.

\subsection*{Notation}

Given a nonnegative integer $n$, let $[n] = \{1, \dots, n\}$ (by convention, $[0] = \emptyset$). The group of permutations of $[n]$ is denoted $\Sigma_n$.  Throughout, $\bk$ will denote a commutative ring. In some places we will restrict it to be a field of characteristic $0$. Given a set $X$, $\bk[X]$ is the free $\bk$-module with basis $X$. On the other hand, $\bk[x_1,\dots,x_r]$ denotes the ring of polynomials in variables $x_1,\dots,x_r$ with coefficients in $\bk$.

Given a $\bZ$-graded algebra $A = \bigoplus_e A_e$ and $d \in \bZ$, we write $A(d)$ for the graded shift with $A(d)_e = A_{d+e}$.

\section{The Veronese category}

\subsection{Polynomial rings}
Let $\bk$ be a commutative ring. Fix $r > 0$. 

To motivate the constructions in this section, we recall some definitions from \cite[\S 2.1]{secver}. There, we defined a bigraded space $\cA$ with $\cA_{d,n} = (\Sym^d \bk^r)^{\otimes n}$ which is equipped with two products. For the first, let $\sigma$ be a partition $\{i_1 < \cdots < i_n\} \cup \{j_1 < \cdots < j_m\}$ of $[n+m]$ into subsets of sizes $n$ and $m$. For $u_i, v_j \in \Sym^d \bk^r$, define
\begin{align*}
\cdot_\sigma \colon (\Sym^d \bk^r)^{\otimes n} \otimes (\Sym^d \bk^r)^{\otimes m} \to (\Sym^d \bk^r)^{\otimes (n+m)}
\end{align*}
by $(u_1 \otimes \cdots \otimes u_n) \cdot_\sigma (v_1 \otimes \cdots \otimes v_m) = w_1 \otimes \cdots \otimes w_{n+m}$ with $w_{i_k}=u_k$ and $w_{j_k}=v_k$.

The second multiplication is given by
\begin{align*}
* \colon (\Sym^d \bk^r)^{\otimes n} \otimes (\Sym^e \bk^r)^{\otimes n} &\to (\Sym^{d+e} \bk^r)^{\otimes n}\\
(u_1 \otimes \cdots \otimes u_n) \otimes (v_1 \otimes \cdots \otimes v_n) &\mapsto (u_1v_1) \otimes \cdots \otimes (u_n v_n).
\end{align*}
Implicit in this definition is that $\alpha* \beta = 0$ for $\alpha \in \cA_{d,n}$ and $\beta \in \cA_{e,m}$ when $n \ne m$. In \cite{secver}, we were interested in $\cA$ as a left-module over itself via these two multiplications: in our application to secant varieties, our primary interest was understanding what an element in some bigraded space $\cA_{d,m}$ generates under left multiplication by all other elements in all possible ways. The space $\cA_{d,m}$ can be thought of as the space of all possible operations between $\cA_{0,0} = \bk$ and $\cA_{d,m}$, and so $\cA$ is ``freely generated'' in bidegree $(0,0)$. For our applications to syzygies, we will need the spaces which are freely generated in other bidegrees.

Following \cite{fi-modules} and \cite{catgb}, we will encode the operations from $\cA_{d,m}$ to $\cA_{e,n}$ as the space of morphisms from an object $(d,m)$ to an object $(e,n)$ in an abstract category $\bk\cV_r$. The above two operations tell us how to do this when $d=e$ or when $m=n$. To be precise, when $d=e$, an operation from $\cA_{d,m}$ to $\cA_{d,n}$ is given by the partition $\sigma$ of $[n]$ and an element of $\cA_{d,n-m}$. A basis of these operations can be  encoded by an order-preserving injection $[m] \to [n]$ together with an ordered list of monomials, and we choose this latter perspective for consistency with the category ${\bf OI}$ studied in \cite[\S 7]{catgb}. When $m=n$, an operation from $\cA_{d,n}$ to $\cA_{e,n}$ consists of a choice of an element of $\cA_{e-d,n}$, and has a basis given by the monomials, which again are represented by an ordered list of $n-m$ monomials in $\Sym^d \bk^r$. Our preference is to represent a monomial in $\Sym^d \bk^r$ by its exponent vector in $\bZ_{\ge 0}^r$. 

When $d \ne e$ and $m \ne n$, succinctly describing a basis for the space of operations is more complicated; roughly, it should be a formal composition of the basic morphisms just mentioned modulo certain natural identifications. Our task now is to carefully give these definitions along with their compositions.

Define
\[
\multi{r}{d} = \{(x_1, \dots, x_r) \in \bZ_{\ge 0}^r \mid x_1 + \cdots + x_r = d\}.
\]
As mentioned, we think of this as a basis for $\Sym^d \bk^r$, and we define addition componentwise 
\[
+ \colon \multi{r}{d} \times \multi{r}{e} \to \multi{r}{d+e}.
\]

\begin{definition} \label{defn:veronese-cat}
Define the \df{Veronese category} $\cV_r$ as follows. The objects of $\cV_r$ are pairs $(d,m) \in \bZ_{\ge 0}^2$ and a morphism $\alpha \colon (d,m) \to (e,n)$ consists of the following data:
\begin{itemize}
\item An order-preserving injection $\alpha_1 \colon [m] \to [n]$,
\item A function $\alpha_2 \colon [n] \setminus \alpha_1([m]) \to \multi{r}{e}$,
\item A function $\alpha_3 \colon [m] \to \multi{r}{e-d}$.
\end{itemize}
In particular, $\hom_{\cV_r}((d,m), (e,n)) = \emptyset$ if $d > e$. Given another morphism $\beta \colon (e,n) \to (f,p)$, the composition $\beta \circ \alpha = \gamma \colon (d,m) \to (f,p)$ is defined by 
\begin{itemize}
\item $\gamma_1 = \beta_1 \circ \alpha_1$,
\item $\gamma_2 \colon [p] \setminus \gamma_1([m]) \to \multi{r}{f}$ is defined by:
  \begin{itemize}
  \item if $i \in [p] \setminus \beta_1([n])$, then $\gamma_2(i) = \beta_2(i)$, and
  \item if $i \in \beta_1([n] \setminus \alpha_1([m]))$, then $\gamma_2(i) = \alpha_2(i') + \beta_3(i')$ where $i'$ is the unique preimage of $i$ under $\beta_1$.
  \end{itemize}
\item $\gamma_3 \colon [m] \to \multi{r}{f-d}$ is defined by $\gamma_3(i) = \alpha_3(i) + \beta_3(\alpha_1(i))$. \qedhere
\end{itemize}
\end{definition}

When $d=e$, the function $\alpha_3$ is superfluous and the pair $(\alpha_1, \alpha_2)$ encodes an operation as discussed above. Similarly, when $n=m$, the functions $\alpha_1$ and $\alpha_2$ are superfluous, and $\alpha_3$ also encodes an operation as discussed above.

\begin{lemma}
Composition as defined above is associative.
\end{lemma}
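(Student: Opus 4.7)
The plan is to verify associativity componentwise on the triple $(\cdot_1,\cdot_2,\cdot_3)$. Take three composable arrows $\alpha\colon(d,m)\to(e,n)$, $\beta\colon(e,n)\to(f,p)$, $\gamma\colon(f,p)\to(g,q)$, and compare $(\gamma\circ\beta)\circ\alpha$ with $\gamma\circ(\beta\circ\alpha)$. The first component is immediate: by definition it is just the composition of order-preserving injections, and ordinary function composition is associative, so both sides yield $\gamma_1\circ\beta_1\circ\alpha_1\colon[m]\to[q]$.

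For the third component I would just unwind the formula. Using the definition twice gives
\[
((\gamma\circ\beta)\circ\alpha)_3(i)=\alpha_3(i)+(\gamma\circ\beta)_3(\alpha_1(i))=\alpha_3(i)+\beta_3(\alpha_1(i))+\gamma_3(\beta_1(\alpha_1(i))),
\]
and likewise $(\gamma\circ(\beta\circ\alpha))_3(i)=(\beta\circ\alpha)_3(i)+\gamma_3(\beta_1(\alpha_1(i)))=\alpha_3(i)+\beta_3(\alpha_1(i))+\gamma_3(\beta_1(\alpha_1(i)))$. Associativity of $+$ on $\multi{r}{g-d}$ (which is componentwise addition of integer vectors) closes the case.

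The second component is where the bookkeeping lives, and it is the main (though still routine) obstacle. The domain $[q]\setminus(\gamma_1\beta_1\alpha_1)([m])$ splits naturally into three strata according to where an element $j$ first falls outside the images:
(A) $j\notin\gamma_1([p])$; (B) $j\in\gamma_1([p]\setminus\beta_1([n]))$; (C) $j\in\gamma_1(\beta_1([n]\setminus\alpha_1([m])))$. I would evaluate both compositions on each stratum, unfolding the two-case definition once for the outer composition and, when the element lies in the ``image'' case, once more for the inner one. On (A) both sides return $\gamma_2(j)$; on (B), setting $j'=\gamma_1^{-1}(j)$, both return $\beta_2(j')+\gamma_3(j')$; on (C), setting $j''=\beta_1^{-1}(\gamma_1^{-1}(j))$, both return $\alpha_2(j'')+\beta_3(j'')+\gamma_3(\beta_1(j''))$, again using associativity of $+$ on $\multi{r}{f}$.

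The only subtlety is making sure the two definitions sort a given $j$ into matching subcases. On (B) the ``left-associated'' composition lands $j$ in the $[q]\setminus(\gamma\beta)_1([n])$ subcase of the outer rule and then in the $\gamma_1([p]\setminus\beta_1([n]))$ subcase when $(\gamma\circ\beta)_2$ is evaluated, while the ``right-associated'' composition lands $j$ in the $\gamma_1([p]\setminus(\beta\alpha)_1([m]))$ subcase with $\gamma_1^{-1}(j)\in[p]\setminus\beta_1([n])$; both yield the same formula. The analogous check on (C) is where the third component of $\beta$ appears via $(\gamma\circ\beta)_3(j'')$ on one side and via $(\beta\circ\alpha)_2(\gamma_1^{-1}(j))$ on the other, and these agree precisely because the ``$+\beta_3$'' contribution arises in the same place from the two orders of composition. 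Once all three strata match, associativity is proved.
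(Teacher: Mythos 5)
Your proof is correct and follows the same strategy as the paper: verify each of the three components separately, with the first by associativity of function composition, the third by unwinding the recursive formula, and the second by splitting the domain into the same three strata (A), (B), (C) and checking each. The bookkeeping matches the paper's argument exactly, so there is nothing further to add.
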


\begin{proof}
Suppose we are given three morphisms 
\[
(d,m) \xrightarrow{\alpha} (e,n) \xrightarrow{\beta} (f,p) \xrightarrow{\gamma} (g,q).
\]
We will verify that all 3 components of both ways of interpreting $\gamma\beta\alpha$ are the same.
\begin{itemize}
\item $(\gamma_1 \beta_1) \alpha_1 = \gamma_1 (\beta_1  \alpha_1)$ by associativity of function composition.
\item Consider $[q] \setminus (\gamma_1 \beta_1 \alpha_1 ([m])) \to \multi{r}{g}$. 
  \begin{itemize}
  \item If $i \in [q] \setminus \gamma_1([p])$, then $i \mapsto \gamma_2(i)$ under both compositions.
  \item If $i \in \gamma_1([p] \setminus \beta_1([n]))$, 
let $i'$ be the unique preimage of $i$ under $\gamma_1$. 

Under $\gamma(\beta\alpha)$, we have $i \mapsto (\beta\alpha)_2(i') + \gamma_3(i') = \beta_2(i') + \gamma_3(i')$. 

Under $(\gamma \beta)\alpha$, we have $i \mapsto (\gamma\beta)_2(i) = \beta_2(i') + \gamma_3(i')$.

  \item If $i \in \gamma_1\beta_1([n] \setminus \alpha_1([m]))$, 
let $i'$ be the unique preimage of $i$ under $\gamma_1$, and let $i''$ be the unique preimage of $i'$ under $\beta_1$.

Under $\gamma(\beta\alpha)$, we have $i \mapsto (\beta\alpha)_2(i') + \gamma_3(i') = \alpha_2(i'') + \beta_3(i'') + \gamma_3(i')$.

Under $(\gamma\beta)\alpha$, we have $i \mapsto \alpha_2(i'') + (\gamma\beta)_3(i'') = \alpha_2(i'') + \beta_3(i'') + \gamma_3(i')$.
  \end{itemize}

\item Now consider the map $[m] \to \multi{r}{g-d}$. 

Under $\gamma(\beta \alpha)$, we have $i \mapsto (\beta\alpha)_3(i) + \gamma_3(\beta_1\alpha_1(i)) = \alpha_3(i) + \beta_3(\alpha_1(i)) + \gamma_3(\beta_1\alpha_1(i))$.

Under $(\gamma\beta) \alpha$, we have $i \mapsto \alpha_3(i) + (\gamma\beta)_3(\alpha_1(i)) = \alpha_3(i) + \beta_3(\alpha_1(i)) + \gamma_3(\beta_1\alpha_1(i))$. \qedhere
\end{itemize}
\end{proof}

\begin{remark} \label{rmk:OI}
For fixed $d$, the full subcategory of $\cV_r$ on the objects $(d,n)$ with $n$ varying is equivalent to the category ${\bf OI}_N$ from \cite[\S 7.1]{catgb} where $N = |\multi{r}{d}|$. So $\cV_r$ combines these all at once with $d$ varying, but there are additional morphisms between these subcategories that allow them to interact.
\end{remark}

Let $\bk\cV_r$ be the $\bk$-linearization of $\cV_r$, i.e., $\hom_{\bk\cV_r}(x,y) = \bk[\hom_{\cV_r}(x,y)]$. A \df{$\bk\cV_r$-module} is a functor from $\cV_r$ to the category of $\bk$-modules. Equivalently, a $\bk\cV_r$-module is a $\bk$-linear functor from $\bk\cV_r$ to the category of $\bk$-modules. Morphisms of $\bk\cV_r$-modules are natural transformations, and $\bk\cV_r$-modules form an abelian category where submodules, kernels, cokernels, etc. are computed pointwise.

\begin{remark}
We can define the $\bk$-linearization $\bk\cV_r$ by modifying Definition~\ref{defn:veronese-cat} so that the functions $\alpha_2$ and $\alpha_3$ take values in the space of degree $e$ or degree $d-e$ homogeneous polynomials in $\bk[x_1,\dots,x_r]$, respectively, if we impose certain multilinear relations on them. While this seems complicated, this is necessary in Definition~\ref{def:VB} where $\bk[x_1,\dots,x_r]$ is replaced by a general ring which does not possess a monomial basis.
\end{remark}

Given $(d,m) \in \bZ_{\ge 0}^2$, define a $\bk\cV_r$-module $P_{d,m}$ by $P_{d,m}(e,n) = \bk[\hom_{\cV_r}((d,m), (e,n))]$. This is the principal projective $\bk\cV_r$-module generated in bidegree $(d,m)$, and they give a set of projective generators for the category of $\bk\cV_r$-modules, i.e., every $\bk\cV_r$-module is a quotient of a direct sum of principal projectives. Then $P_{d,m}(e,n)$ is the space of operations from $\cA_{d,m}$ to $\cA_{e,n}$ which we discussed at the beginning of the section, so $P_{d,m}$ is an $\cA$-module freely generated in bidegree $(d,m)$.

To emphasize the category, we will write $P_{d,m}^{\cV_r}$. A $\bk\cV_r$-module $M$ is \df{finitely generated} if there is a surjection $\bigoplus_{i=1}^g P_{d_i, m_i} \to M \to 0$ with $g$ finite. A $\bk\cV_r$-module is \df{noetherian} if all of its submodules are finitely generated.

For the definition of a Gr\"obner category, see \cite[Definition 4.3.1]{catgb}. We will not need this concept except for its consequence in the next result.

\begin{proposition} \label{prop:ver-noeth}
$\cV_r$ is a Gr\"obner category. In particular, if $\bk$ is noetherian, then every finitely generated $\bk\cV_r$-module is noetherian.
\end{proposition}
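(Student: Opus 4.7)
The plan is to apply the Gröbner-category machinery of \cite{catgb}. Concretely, I invoke the noetherianity consequence stated for Gröbner categories in \cite{catgb} and reduce to verifying that $\cV_r$ is Gröbner in the sense of \cite[Definition 4.3.1]{catgb}: (i) $\cV_r$ is directed, i.e.\ every endomorphism is an identity; and (ii) for each object $x=(d,m)$ the set $|\cV_r|_x$ of morphisms out of $x$, modulo post-composition with isomorphisms of the target, is well-partially-ordered under divisibility and carries an admissible well-order.

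Condition (i) is immediate from Definition~\ref{defn:veronese-cat}: an endomorphism of $(e,n)$ must have $\alpha_1 = \mathrm{id}_{[n]}$ (the only order-preserving bijection), empty $\alpha_2$, and identically zero $\alpha_3 \colon [n] \to \multi{r}{0}$, hence equals the identity. In particular $|\cV_r|_x$ coincides with the full set of morphisms out of $x$.

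For (ii) I encode $\alpha \colon (d,m) \to (e,n)$ as the pair $(e, w(\alpha))$, where $w(\alpha) = w_1 \cdots w_n$ is the length-$n$ word over the alphabet $\Lambda = (\{\ast\} \sqcup [m]) \times \bZ_{\ge 0}^r$ defined by
\[
w_i = \begin{cases} (\ast, \alpha_2(i)) & i \notin \alpha_1([m]), \\ (j, \alpha_3(j)) & i = \alpha_1(j). \end{cases}
\]
Equip $\Lambda$ with the product of the equality relation on the finite set $\{\ast\}\sqcup[m]$ and the componentwise order on $\bZ_{\ge 0}^r$; this is a well-partial-order by Dickson's lemma, so $\Lambda^*$ is well-partially-ordered under the subword-embedding preorder by Higman's lemma, and hence so is $\bZ_{\ge 0} \times \Lambda^*$ under the product order. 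The main verification is that $\alpha$ divides $\beta$ in $\cV_r$ precisely when $(e_\alpha, w(\alpha)) \le (e_\beta, w(\beta))$: if $\gamma$ witnesses divisibility then $\gamma_1$ is the order-preserving embedding of positions (mapping $\ast$-positions to $\ast$-positions and the $j$-position to the $j$-position, since $\gamma_1\alpha_1 = \beta_1$), and the values of $\gamma_3$ supply componentwise nonnegative differences of total degree $e_\beta - e_\alpha$ between the corresponding monomial coordinates; conversely, a word embedding reconstructs $\gamma_1$ and then forces $\gamma_2, \gamma_3$ via the monomial differences dictated by Definition~\ref{defn:veronese-cat}. This exhibits divisibility as a refinement of a wpo, hence as a wpo. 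An admissible well-order is then obtained by ordering morphisms first by the target length $n$, then by a lexicographic well-order on words built from a fixed well-order on $\Lambda$; since the composition law of Definition~\ref{defn:veronese-cat} adds a fixed length $p-n$ and adds componentwise monomials, this order is preserved under post-composition.

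The main obstacle is the bookkeeping in matching the three-piece composition law of Definition~\ref{defn:veronese-cat} with the subword embedding on $\Lambda^*$, and in handling cleanly the edge case $n = 0$ (where the word is empty and $e$ must be recorded separately, which is why I bundle it into the pair $(e, w(\alpha))$). Once this translation is in place, Dickson's and Higman's lemmas yield the wpo conclusion and the remaining module-theoretic statement follows from the general noetherianity theorem in \cite{catgb}.
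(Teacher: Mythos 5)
Your proof is correct and follows essentially the same strategy as the paper: encode each morphism out of $(d,m)$ as a word over a well-partially-ordered alphabet built from copies of $\bZ_{\ge 0}^r$, invoke Dickson's and Higman's lemmas, and then equip the hom-sets with a lexicographic admissible order. The only cosmetic difference is that you tag the $m$ matched positions by their index in $[m]$, whereas the paper uses just a two-element marker $\bZ_{\ge 0}^r \amalg \bZ_{\ge 0}^r$ (sufficient since the order-preserving $\alpha_1$ is determined by its image); your bundling of the target degree $e$ into the encoding also cleanly handles the degenerate $n=0$ case.
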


\begin{subeqns}
\begin{proof}
We will use \cite[Theorem 4.3.2]{catgb}. So fix $(d,m) \in \bZ_{\ge 0}^2$. Let 
\[
\Sigma = \bZ_{\ge 0}^r \amalg \bZ_{\ge 0}^r = \coprod_{e \ge 0} \multi{r}{e} \amalg \coprod_{e \ge d} \multi{r}{e-d}.
\]
We order elements of each copy of $\bZ_{\ge 0}^r$ by componentwise comparison, i.e., $(a_1, \dots, a_r) \le (b_1, \dots, b_r)$ if and only if $a_i \le b_i$ for all $i$. Elements in the different copies of $\bZ_{\ge 0}^r$ are incomparable. 

Recall that a poset $P$ is noetherian if, given any infinite sequence $x_1, x_2, \dots$ with $x_i \in P$, there exists $i<j$ such that $x_i \le x_j$. Then $\bZ_{\ge 0}^r$ is noetherian by Dickson's lemma (alternatively, finite products of noetherian posets are noetherian, see \cite[Proposition 2.3]{catgb}). Clearly finite disjoint unions preserve noetherianity, so $\Sigma$ is also noetherian. 

Given a poset $P$, let $P^\star$ be the set of finite length sequences of elements in $P$. The Higman order on $P^\star$ is defined by $(x_1, \dots, x_r) \le (y_1, \dots, y_s)$ if there exists $1 \le i_1 < \cdots < i_r \le s$ such that $x_j \le y_{i_j}$ for $1 \le j \le r$. If $P$ is noetherian, then so is $P^\star$ by Higman's lemma (see \cite[Theorem 2.5]{catgb}). In particular, $\Sigma^\star$ is noetherian.

Given a morphism $\alpha \colon (d,m) \to (e,n)$, define a word $w(\alpha) \in \Sigma^\star$ of length $n$ by
\begin{align} \label{eqn:word}
w(\alpha)_i = \begin{cases} 
\alpha_3(j) \in \multi{r}{e-d} & \text{if $i = \alpha_1(j)$}\\
\alpha_2(i) \in \multi{r}{e} & \text{if $i \notin \alpha_1([m])$} 
\end{cases}.
\end{align}
Note that $\alpha$ can be reconstructed from $w(\alpha)$. Define $\alpha \le \gamma$ if there exists $\beta$ such that $\gamma = \beta \circ \alpha$. Then it follows from our definition of composition that the set of morphisms $\alpha \colon (d,m) \to (e,n)$ with $(d,m)$ fixed and $(e,n)$ varying is naturally a subposet of $\Sigma^\star$, i.e., $\alpha \le \alpha'$ if and only if $w(\alpha) \le w(\alpha')$. Since noetherianity is inherited by subposets, we conclude that this partial order on morphisms with source $(d,m)$ is noetherian. This is one of the conditions to check that $\cV_r$ is Gr\"obner.

To finish, we need to show that the set of morphisms with source $(d,m)$ is orderable, i.e., for each $(e,n)$, there exists a total ordering on the set of morphisms $(d,m) \to (e,n)$ so that for any $\beta \colon (e,n) \to (f,p)$, we have $\alpha < \alpha'$ implies that $\beta \alpha < \beta \alpha'$. To do this, first put the lexicographic order on $\bZ_{\ge 0}^r$ and then totally order $\Sigma$ by declaring all of the elements of the first $\bZ_{\ge 0}^r$ to be larger than all of the elements of the second $\bZ_{\ge 0}^r$. Now extend this to a lexicographic ordering on $\Sigma^\star$ to get the desired ordering.
\end{proof} 
\end{subeqns}

By our comments in the beginning of the section, \cite[Corollary 2.9]{secver} is a special case of Proposition~\ref{prop:ver-noeth}.

\begin{remark}
In the definition of morphisms in $\cV_r$, we could drop the requirement that $\alpha_1$ is order-preserving. The forgetful functor from $\cV_r$ to this category satisfies property (F) in the sense of \cite[Definition 3.2.1]{catgb}. Hence this new category is quasi-Gr\"obner and its finitely generated modules are noetherian when $\bk$ is noetherian. Furthermore, going back to Remark~\ref{rmk:OI}, the full subcategory of this category on objects $(d,n)$, with $d$ fixed and $n$ varying, is equivalent to the category ${\bf FI}_N$ from \cite[\S 7.1]{catgb} with $N = |\multi{r}{d}|$.
\end{remark}

\subsection{Symmetrized versions}

In $\bk\cV_r$, the space of morphisms $(0,0) \to (d,m)$ is identified with the tensor power $(\Sym^d \bk^r)^{\otimes m}$. For our applications, we need symmetric powers $\Sym^m(\Sym^d \bk^r)$, so we now define symmetrized versions of the Veronese category $\bk\cV_r$.

\begin{definition}
Given $\alpha \colon (d,m) \to (e,n)$ and $\sigma \in \Sigma_n$, there is a unique $\tau \in \Sigma_m$ so that $\sigma \alpha_1 \tau^{-1}$ is order-preserving; we refer to $\tau$ as the permutation induced by $\sigma$ with respect to $\alpha_1$. Define $\sigma(\alpha)$ by
\begin{itemize}
\item $\sigma(\alpha)_1 = \sigma \alpha_1 \tau^{-1}$,
\item $\sigma(\alpha)_2 = \alpha_2 \sigma^{-1}$,
\item $\sigma(\alpha)_3 = \alpha_3 \tau^{-1}$.
\end{itemize}
This defines an action of $\Sigma_n$ on $\hom_{\cV_r}((d,m), (e,n))$, and we set
\[
\hom_{\bk\cV_r^\Sigma}((d,m), (e,n)) = \bk[\hom_{\cV_r}((d,m), (e,n))]^{\Sigma_n}
\]
where the superscript denotes taking invariants. 
\end{definition}

\begin{lemma} \label{lem:sym-action}
Given $\alpha \colon (d,m) \to (e,n)$ and $\beta \colon (e,n) \to (f,p)$, and $\sigma \in \Sigma_p$, we have $\sigma(\beta \circ \alpha) = \sigma(\beta) \circ \tau(\alpha)$ where $\tau \in \Sigma_n$ is the permutation induced by $\sigma$ with respect to $\beta_1$. In particular, $\bk\cV_r^\Sigma$ is a $\bk$-linear subcategory of $\bk\cV_r$.
\end{lemma}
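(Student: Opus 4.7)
The plan is to unpack both sides of the asserted equality as morphisms in $\cV_r$ and verify that their three components (the order-preserving injection, the function on the complement, and the function on the image) agree. The whole argument hinges on a single preliminary fact: if $\rho\in\Sigma_m$ denotes the permutation induced by $\sigma$ with respect to $\gamma_1 := \beta_1\circ\alpha_1$, and $\kappa\in\Sigma_m$ denotes the permutation induced by $\tau$ with respect to $\alpha_1$, then $\rho=\kappa$. This follows from uniqueness of the induced permutation: the composite
\[
\sigma\beta_1\alpha_1\kappa^{-1} \;=\; (\sigma\beta_1\tau^{-1})(\tau\alpha_1\kappa^{-1})
\]
is order-preserving because each factor is, and the permutation of $[m]$ making $\sigma\gamma_1(\cdot)^{-1}$ order-preserving is unique.

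With $\rho=\kappa$ established, the first-component equality $\sigma(\gamma)_1 = \sigma(\beta)_1\circ\tau(\alpha)_1$ is immediate, since both sides evaluate to $\sigma\beta_1\alpha_1\rho^{-1}$. For the third component, evaluating at $i\in[m]$ and applying the formulas for $\tau(\alpha)_3$ and $\sigma(\beta)_3$, both sides reduce to $\alpha_3(\rho^{-1}(i)) + \beta_3(\alpha_1(\rho^{-1}(i)))$. The second component is the main piece of bookkeeping: partition $[p]\setminus\sigma(\gamma)_1([m])$ into the two pieces $[p]\setminus\sigma\beta_1([n])$ and $\sigma\beta_1([n]\setminus\alpha_1([m]))$ matching the piecewise definition of composition. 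On the first piece, both sides return $\beta_2(\sigma^{-1}(i))$. On the second piece, if $\sigma\beta_1(i')=i$, then the preimage of $i$ under $\sigma(\beta)_1 = \sigma\beta_1\tau^{-1}$ is $\tau(i')$; substituting this into $\tau(\alpha)_2(\tau(i')) + \sigma(\beta)_3(\tau(i'))$ gives $\alpha_2(i')+\beta_3(i')$, which is exactly $\gamma_2(\beta_1(i')) = \sigma(\gamma)_2(i)$. This case analysis is routine but is the only place where there is any real danger of a notational slip.

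For the ``in particular'' statement, let $\alpha$ and $\beta$ be $\Sigma_n$-invariant and $\Sigma_p$-invariant elements of the relevant hom-spaces, and write $\beta=\sum_i c_i\beta^{(i)}$ as a sum of basis morphisms. Given $\sigma\in\Sigma_p$, apply the first part of the lemma termwise to obtain
\[
\sigma(\beta\circ\alpha) \;=\; \sum_i c_i\, \sigma(\beta^{(i)})\circ\tau_i(\alpha),
\]
where $\tau_i\in\Sigma_n$ is the permutation induced by $\sigma$ with respect to $\beta^{(i)}_1$. The $\Sigma_n$-invariance of $\alpha$ gives $\tau_i(\alpha)=\alpha$ for every $i$, so the sum collapses to $\sigma(\beta)\circ\alpha = \beta\circ\alpha$ by the $\Sigma_p$-invariance of $\beta$. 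This shows that composition restricts to $\bk\cV_r^\Sigma$, making it a $\bk$-linear subcategory of $\bk\cV_r$.
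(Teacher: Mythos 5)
Your proof is correct and follows essentially the same route as the paper's: establish that the permutation induced by $\sigma$ with respect to $\beta_1\alpha_1$ coincides with the one induced by $\tau$ with respect to $\alpha_1$ (via uniqueness of the order-preserving factorization), then check the three components directly. The only differences are cosmetic (you name two permutations and prove equality where the paper names one and observes it plays both roles, and you make the "in particular" step explicit), so there is nothing substantive to add.
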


\begin{proof}
Let $\rho \in \Sigma_\ell$ be the permutation induced by $\tau$ with respect to $\alpha_1$. Then $(\sigma \beta_1 \tau^{-1}) (\tau \alpha_1 \rho^{-1})$ is order-preserving, so $\rho$ is also the permutation induced by $\sigma$ with respect to $\beta_1 \alpha_1$. Hence $\sigma(\beta \alpha)_1 = \sigma(\beta)_1 \tau(\alpha)_1$.

Next, we show that $\sigma(\beta \alpha)_2 = (\sigma(\beta) \tau(\alpha))_2$. If $i \in [p] \setminus \sigma (\beta)_1([n])$, then 
\begin{align*}
\sigma(\beta \alpha)_2(i) = (\beta \alpha)_2 \sigma^{-1}(i) = \beta_2(\sigma^{-1}(i)) = \sigma(\beta)_2(i) = (\sigma(\beta) \tau(\alpha))_2(i).
\end{align*}
Otherwise, if $i \in \sigma(\beta)_1([n] \setminus \tau(\alpha)_1([m]))$, let $i'$ be the unique preimage of $i$ under $\sigma \beta_1 \tau^{-1}$. Then $\tau^{-1}(i')$ is the unique preimage of $\sigma^{-1}(i)$ under $\beta_1$, and we have
\begin{align*}
\sigma(\beta \alpha)_2(i) = (\beta \alpha)_2 \sigma^{-1}(i) = \alpha_2(\tau^{-1}(i')) + \beta_3(\tau^{-1}(i')) = \tau(\alpha)_2(i') + \sigma(\beta)_3 (i') = (\sigma(\beta) \tau(\alpha))_2(i).
\end{align*}

Finally, we show that $\sigma(\beta \alpha)_3 = (\sigma(\beta) \tau(\alpha))_3$. For $i \in [\ell]$, we have 
\begin{align*}
(\sigma(\beta) \circ \tau(\alpha))_3(i) 
&= \tau(\alpha)_3(i) + \sigma(\beta)_3 (\tau(\alpha)_1(i))\\ 
&= \alpha_3(\rho^{-1}(i)) + \sigma(\beta)_3 (\tau \alpha_1 \rho^{-1}(i))\\
&= \alpha_3(\rho^{-1}(i)) + \beta_3 \alpha_1 (\rho^{-1}(i))\\
&= (\beta \alpha)_3 (\rho^{-1}(i)) =\sigma(\beta \alpha)_3(i).  \qedhere
\end{align*}
\end{proof}

A $\bk\cV_r^\Sigma$-module is a $\bk$-linear functor from $\bk\cV_r^\Sigma$ to the category of $\bk$-modules. For each $(d,m)$, the principal projective $\bk\cV_r^\Sigma$-module is defined by $P_{d,m}^{\bk\cV_r^\Sigma}(e,n) = \hom_{\bk \cV_r^\Sigma}((d,m), (e,n))$ and we say that a $\bk\cV_r^\Sigma$-module $M$ is finitely generated if there is a surjection $\bigoplus_{i=1}^g P_{d_i, m_i}^{\bk\cV_r^\Sigma} \to M \to 0$ with $g$ finite.

\begin{proposition} 
If $\bk$ contains a field of characteristic $0$, then every finitely generated $\bk\cV_r^\Sigma$-module is noetherian.
\end{proposition}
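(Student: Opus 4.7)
The plan is to bootstrap Proposition~\ref{prop:ver-noeth} (noetherianity for $\bk\cV_r$-modules) up to the symmetrized setting using the Reynolds averaging operator $\frac{1}{n!}\sum_{\sigma \in \Sigma_n}\sigma$, which is available precisely because $\bk$ contains $\bQ$. A routine reduction brings the claim to the case of a finite direct sum of principal projectives: every finitely generated $\bk\cV_r^\Sigma$-module $N$ is a quotient of some $N_0 := \bigoplus_{i=1}^g P_{d_i,m_i}^{\bk\cV_r^\Sigma}$, and submodules of $N$ pull back to submodules of $N_0$, so it suffices to show that $N_0$ is noetherian.

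To exploit Proposition~\ref{prop:ver-noeth}, consider the parallel $\bk\cV_r$-module $\tilde{N}_0 := \bigoplus_{i=1}^g P_{d_i,m_i}^{\bk\cV_r}$, which is finitely generated over $\bk\cV_r$ and hence noetherian (under the tacit assumption that $\bk$ itself is noetherian, as required for Proposition~\ref{prop:ver-noeth}). Each $\tilde{N}_0(e,n) = \bigoplus_i \bk[\hom_{\cV_r}((d_i,m_i),(e,n))]$ carries the natural $\Sigma_n$-action on $\hom_{\cV_r}$ from the definition preceding Lemma~\ref{lem:sym-action}, and by construction $N_0(e,n) = \tilde{N}_0(e,n)^{\Sigma_n}$. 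Given a $\bk\cV_r^\Sigma$-submodule $K \subseteq N_0$, define $\tilde{K} \subseteq \tilde{N}_0$ to be the $\bk\cV_r$-submodule generated by $\bigcup_{(e,n)} K(e,n)$. Noetherianity of $\tilde{N}_0$ and the usual ascending-chain argument yield a finite list of generators $x_1, \dots, x_s$ for $\tilde{K}$ with each $x_i$ lying already in some $K(e_i,n_i)$.

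The key claim is that these same $x_i$ generate $K$ as a $\bk\cV_r^\Sigma$-module. Given $y \in K(f,p)$, the inclusion $y \in \tilde{K}(f,p)$ lets us write $y = \sum_i \beta_i x_i$ with $\beta_i \in \bk[\hom_{\cV_r}((e_i,n_i),(f,p))]$, and then $\Sigma_p$-invariance of $y$ combined with averaging gives
\[
y = \frac{1}{p!}\sum_{\sigma \in \Sigma_p} \sigma(y) = \sum_i \Bigl(\frac{1}{p!}\sum_{\sigma} \sigma(\beta_i)\Bigr) x_i = \sum_i \bar{\beta}_i x_i,
\]
where each $\bar{\beta}_i$ is $\Sigma_p$-invariant by construction and hence lies in $\hom_{\bk\cV_r^\Sigma}((e_i,n_i),(f,p))$. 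The main technical step is the middle equality, which reduces to the identity $\sigma(\beta_i x_i) = \sigma(\beta_i) \cdot x_i$: expanding $x_i$ as a linear combination of morphisms and applying Lemma~\ref{lem:sym-action} gives $\sigma(\beta_i x_i) = \sigma(\beta_i) \cdot \tau_\sigma(x_i)$ for the permutation $\tau_\sigma \in \Sigma_{n_i}$ induced by $\sigma$ relative to $(\beta_i)_1$, and then $\tau_\sigma(x_i) = x_i$ because $x_i \in K(e_i,n_i) \subseteq \tilde{N}_0(e_i,n_i)^{\Sigma_{n_i}}$. The division by $p!$ is the one essential use of characteristic zero; without it the descent from $\tilde{K}$ back to $K$ breaks down.
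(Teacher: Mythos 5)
Your proof is correct and follows essentially the same route as the paper's: bootstrap from Proposition~\ref{prop:ver-noeth} by passing to the ambient $\bk\cV_r$-module, extract a finite generating set of invariant elements, and then use the Reynolds averaging operator together with Lemma~\ref{lem:sym-action} (the identity $\sigma(\beta\alpha) = \sigma(\beta)\tau(\alpha)$ combined with $\Sigma_n$-invariance of the generators) to descend back to the symmetrized category. The only cosmetic difference is that you explicitly carry along a finite direct sum of principal projectives whereas the paper treats a single $Q_{d,m}$ and invokes the standard reduction at the end; you also rightly flag the tacit noetherian hypothesis on $\bk$ inherited from Proposition~\ref{prop:ver-noeth}.
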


\begin{proof}
Set $P_{d,m} = P_{d,m}^{\cV_r}$ and $Q_{d,m} = P_{d,m}^{\bk\cV_r^\Sigma}$; we have a natural inclusion $Q_{d,m}(e,n) \subseteq P_{d,m}(e,n)$ for all $(e,n)$. Given a $\bk\cV_r^\Sigma$-submodule $M$ of $Q_{d,m}$, let $N$ be the $\cV_r$-submodule of $P_{d,m}$ that it generates. Given a list of generators of $N$ coming from $M$, Proposition~\ref{prop:ver-noeth} guarantees that some finite subset $\gamma_1, \dots, \gamma_g$ of them already generates $N$. Let $\pi$ be the symmetrization map 
\begin{align*}
\bk[\hom_{\cV_r}((d,m), (e,n))] &\to \bk[\hom_{\cV_r}((d,m), (e,n))]^{\Sigma_n},\\
\alpha &\mapsto \frac{1}{n!} \sum_{\sigma \in \Sigma_n} \sigma(\alpha).
\end{align*}
If $\alpha \in \bk[\hom_{\cV_r}((d,m), (e,n))]^{\Sigma_n}$, then $\pi(\alpha) = \alpha$; given $\beta \in \bk[\hom_{\cV_r}((e,n), (f,p))]$, then $\pi(\beta \alpha) = \pi(\beta) \alpha$ by Lemma~\ref{lem:sym-action}. 

Given any element $\gamma$ of $M$, we have an expression $\gamma = \sum_i \delta_i \gamma_i$ where $\delta_i \in \bk\cV_r$. So, applying $\pi$, we get $\gamma = \pi(\gamma) = \sum_i \pi(\delta_i) \gamma_i$, so $\gamma_1,\dots,\gamma_g$ also generate $M$ as a $\bk\cV_r^\Sigma$-module. In particular, the principal projectives of $\bk\cV_r^\Sigma$ are noetherian, so the same is true for any finitely generated module.
\end{proof}

Now assume that $\bk$ contains a field of characteristic $0$. We define the \df{symmetrized Veronese category} $\sV_r = (\bk\cV_r)_\Sigma$ as follows. First, set
\[
\hom_{\sV_r}((d,m), (e,n)) = \bk[\hom_{\cV_r}((d,m), (e,n))]_{\Sigma_n}
\]
where the subscript denotes coinvariants under $\Sigma_n$. We have an isomorphism
\begin{align*}
\bk[\hom_{\cV_r}((d,m), (e,n))]_{\Sigma_n} &\xrightarrow{\cong} \bk[\hom_{\cV_r}((d,m), (e,n))]^{\Sigma_n}\\
\alpha &\mapsto \frac{1}{n!} \sum_{\sigma \in \Sigma_n} \sigma(\alpha),
\end{align*}
and we use this to transfer the $\bk$-linear category structure from $\bk\cV_r^\Sigma$ to $\sV_r$. Note, in particular, that $\hom_{\sV_r}((0,0), (d,m))$ is identified with $\Sym^m(\Sym^d \bk^r)$, which was our goal mentioned at the beginning of this section.

\subsection{General rings}
Let $B$ be a graded $\bk$-algebra which is generated by $B_1$ and assume that $B_1$ is a quotient of $\bk^r$ with $r<\infty$. In \cite[\S\S 2.2, 3]{secver}, we defined a generalization $\cB$ of the algebra $\cA$ which satisfies $\cB_{d,n} = B_d^{\otimes n}$ and also has two multiplications $\cdot_\sigma$ and $*$. We also defined the symmetric analogue $\cB_\Sigma$ which satisfies $(\cB_\Sigma)_{d,n} = \Sym^n(B_d)$. This was applied to secant varieties of Veronese embeddings of general varieties (other than projective spaces). As in the previous sections, we want to translate the definition of the algebras $\cB$ and $\cB_\Sigma$ into categorical language so that we can discuss modules and apply it to the study of syzygies of secant varieties. The main technical difficulty is that $B$ in general does not have a monomial basis, so rather than define a category and linearize it (as was done for $\cV_r$), we have to start immediately with a $\bk$-linear category.

\begin{definition} \label{def:VB}
Define a $\bk$-linear category $\bk\cV_B$ as follows. Objects of $\bk\cV_B$ are elements of $\bZ_{\ge 0}^2$, and a basic morphism $\alpha \colon (d,m) \to (e,n)$ consists of the following data:
\begin{itemize}
\item An order-preserving injection $\alpha_1 \colon [m] \to [n]$,
\item A function $\alpha_2 \colon [n] \setminus \alpha_1([m]) \to B_e$,
\item A function $\alpha_3 \colon [m] \to B_{e-d}$.
\end{itemize}
Then $\hom_{\bk\cV_B}((d,m), (e,n))$ is the free $\bk$-module generated by basic morphisms modulo the following ``multilinear'' relations:
\begin{itemize}
\item Let $\beta \colon (d,m) \to (e,n)$ be a morphism with $\beta_1 = \alpha_1$, $\beta_2 = \alpha_2$ and $\beta_3(i) = \alpha_3(i)$ for all $i \in [m] \setminus \{j\}$. Then $\alpha + \beta = \gamma$ where $\gamma_1 = \beta_1$, $\gamma_2 = \beta_2$, and $\gamma_3(i) = \beta_3(i)$ for $i \in [m] \setminus \{j\}$ and $\gamma_3(j) = \alpha_3(j) + \beta_3(j)$. 

Furthermore, if $\beta_3(j) = \xi \alpha_3(j)$ for $\xi \in \bk$, then $\xi \alpha = \beta$.

\item Let $\beta \colon (d,m) \to (e,n)$ be a morphism with $\beta_1 = \alpha_1$, $\beta_2 = \alpha_2$ and $\beta_3(i) = \alpha_3(i)$ for all $i \in ([n] \setminus \alpha_1([m])) \setminus \{j\}$. Then $\alpha + \beta = \gamma$ where $\gamma_1 = \beta_1$, $\gamma_3 = \beta_3$, and $\gamma_2(i) = \beta_2(i)$ for $i \in ([n] \setminus \alpha_1([m])) \setminus \{j\}$ and $\gamma_2(j) = \alpha_2(j) + \beta_2(j)$. 

Furthermore, if $\beta_2(j) = \xi \alpha_2(j)$ for $\xi \in \bk$, then $\xi \alpha = \beta$.
\end{itemize}

In particular, 
\[
\hom_{\bk\cV_B}((d,m), (e,n)) \cong (B_{e-d}^{\otimes m} \otimes_\bk B_e^{\otimes (n-m)})^{\oplus \binom{n}{m}}.
\]
More precisely, the morphisms with a given $\alpha_1$ belong to the tensor product of $m$ copies of $B_{e-d}$ and $n-m$ copies of $B_e$ where the $i$th space is $B_{e-d}$ if and only if $i$ in the image of $\alpha_1$.

Composition of morphisms is defined as in $\cV_r$: the main change is to replace $+$ with multiplication in $B$ (in $\cV_r$, we were dealing with the monomial basis of $B = \bk[x_1,\dots,x_r]$ so we used their exponent vectors).
\end{definition}

Similarly, we can define $\bk\cV_B^\Sigma$ and $\bk\sV_B$. So we have an identification
\[
\hom_{\bk\sV_B}((d,m), (e,n)) = \Sym^m(B_{e-d}) \otimes_\bk \Sym^{n-m}(B_e),
\]
and the principal projectives in $\bk\cV_B$ and $\bk\sV_B$ are naturally quotients of the corresponding principal projectives in $\cV_r$ and $\sV_r$. So we get the following result:

\begin{proposition} \label{prop:symver-noeth}
Suppose $\bk$ is a field of characteristic $0$. Every finitely generated $\bk\cV_B$-module is noetherian, and the same is true for finitely generated $\bk\sV_B$-modules.
\end{proposition}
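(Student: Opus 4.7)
The plan is to transfer noetherianity from $\bk\cV_r$ (and its symmetric analogue $\sV_r$) along a surjective $\bk$-linear functor induced by the canonical graded algebra surjection $\pi \colon \bk[x_1,\dots,x_r] \surjects B$, which exists because $B$ is generated in degree $1$ by a quotient of $\bk^r$. Once such a functor is built, finitely generated modules on the $B$-side become finitely generated modules on the polynomial-ring side, and the previously established noetherianity transfers back.

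First I would construct a $\bk$-linear functor $F \colon \bk\cV_r \to \bk\cV_B$ that is the identity on objects. On a basic morphism $\alpha \colon (d,m) \to (e,n)$, whose data $\alpha_2,\alpha_3$ takes values in $\multi{r}{\bullet}$ regarded as the monomial basis of $\bk[x_1,\dots,x_r]$, define $F(\alpha)$ to have the same $\alpha_1$ and to have $\pi \circ \alpha_2$ and $\pi \circ \alpha_3$ in place of $\alpha_2$ and $\alpha_3$; then extend $\bk$-linearly. Composition is respected because composition in $\cV_r$ combines the monomial data by componentwise addition of exponent vectors (i.e., by multiplication in $\bk[x_1,\dots,x_r]$), while composition in $\bk\cV_B$ uses multiplication in $B$, and $\pi$ is a ring homomorphism. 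The multilinear relations defining $\bk\cV_B$ are automatically satisfied since we map into an already-quotiented space and take $\bk$-linear extension. A $\Sigma_n$-equivariant version of this construction yields a parallel $\bk$-linear functor $F^\Sigma \colon \sV_r \to \bk\sV_B$ (compatibility with symmetrization uses the characteristic $0$ identification of $\Sigma_n$-invariants with $\Sigma_n$-coinvariants).

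Because $\pi_e \colon \bk[x_1,\dots,x_r]_e \surjects B_e$ is surjective for every $e$, the identification
\[
\hom_{\bk\cV_B}((d,m),(e,n)) \cong (B_{e-d}^{\otimes m} \otimes_\bk B_e^{\otimes(n-m)})^{\oplus \binom{n}{m}}
\]
shows that $F$ is surjective on every $\hom$ space. Pulling back along $F$ turns a $\bk\cV_B$-module $M$ into a $\bk\cV_r$-module $F^\ast M$, and since $F$ is surjective on morphisms, the $\bk\cV_B$-submodules of $M$ coincide with the $\bk\cV_r$-submodules of $F^\ast M$. Evaluating at $\hom((d,m),-)$, the functor $F$ induces a surjection of $\bk\cV_r$-modules $P_{d,m}^{\cV_r} \surjects F^\ast P_{d,m}^{\bk\cV_B}$, so Proposition~\ref{prop:ver-noeth} makes $P_{d,m}^{\bk\cV_B}$ noetherian as a $\bk\cV_r$-module and hence as a $\bk\cV_B$-module. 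Any finitely generated $\bk\cV_B$-module is a quotient of a finite direct sum of such principal projectives, and is therefore noetherian.

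The same argument with $F^\Sigma$ in place of $F$ and the noetherianity of $\sV_r$ replacing Proposition~\ref{prop:ver-noeth} handles the $\bk\sV_B$ case. The main step requiring care is the construction of $F$ and $F^\Sigma$: one must check functoriality, well-definedness modulo the multilinear relations, and, for $F^\Sigma$, compatibility with the $\Sigma_n$-action on $\hom$ spaces. Once this bookkeeping is in place, the noetherianity transfer is formal.
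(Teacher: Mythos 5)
Your argument is correct and matches the paper's approach exactly: the paper's (one-sentence) justification is precisely that the principal projectives of $\bk\cV_B$ and $\bk\sV_B$ are naturally quotients of the corresponding principal projectives of $\bk\cV_r$ and $\sV_r$, which is the content of your functor $F$ being identity on objects and surjective on hom-spaces. You have simply made explicit the well-definedness and functoriality checks that the paper leaves implicit.
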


\begin{remark}
These definitions parallel the constructions in \cite[\S 3]{secver}. In particular, we can identify $\cB^\Sigma$ and $\cB_\Sigma$ from \cite{secver} with the principal projectives generated in degree $(0,0)$ in $\bk\cV_B^\Sigma$ and $\bk \sV_B$, respectively. Furthermore, the notions of ideal and di-ideal in \cite{secver} translate to submodules in both cases. So \cite[Proposition 3.3]{secver} is a special case of Proposition~\ref{prop:symver-noeth}.
\end{remark}

\section{Syzygies of secant ideals}

In this section, $\bk$ is a field of characteristic $0$ and $B$ is a graded $\bk$-algebra generated by $B_1$ with $\dim_\bk B_1 < \infty$.

The principal projective $P_{0,0}$ in $\bk\sV_B$ is the algebra $\cB_\Sigma$ from \cite{secver} and each principal projective $P_{d,m}$ is a module over it. We use $\cB_\Sigma(-d,-m)$ to denote this module; by Proposition~\ref{prop:symver-noeth}, these are all noetherian modules. 

In \cite[\S 4]{secver}, we defined $\cI_B(1) \subset \cB_\Sigma$ by setting $\cI_B(1)_{d,m}$ to be the kernel of the multiplication map $\Sym^m(B_d) \to B_{dm}$, and we inductively defined the \df{secant ideals} $\cI_B(r)$ by setting $\cI_B(r)_{d,m}$ to be the kernel of the comultiplication map 
\[
\Sym^m(B_d) \to \bigoplus_{i=0}^m \Sym^i(B_d) / \cI_B(1)_{d,i} \otimes_\bk \Sym^{m-i}(B_d) / \cI_B(r-1)_{d,m-i}.
\]
By \cite[Proposition 4.3]{secver}, $\cI_B(r)$ is a $\bk\sV_B$-submodule of $P_{0,0}$ for all $r$.

\begin{remark}
Actually, \cite[Proposition 4.3]{secver} shows that assigning the kernel of $B_d^{\otimes m} \to B_{dm}$ to $(d,m)$ is a functor on $\bk \cV_B$, in which case one can work over any $\bk$ (not just fields of characteristic $0$). This is the preimage of $\cI_B(1)$ under the quotient map $\cB \to \cB_\Sigma$.
Unfortunately, this map is not compatible with all of the relevant algebraic operations, so the preimage of $\cI_B(2)$ need not be a submodule (see Example~\ref{eg:rnc}).
\end{remark}

For $d$ fixed, $\bigoplus_m \cI_B(r)_{d,m}$ is an ideal in $\Sym(B_d)$. So we can define an algebra
\[
\Sec_{d,r}(B) = \bigoplus_{m \ge 0} \Sym^m(B_d) / \cI_B(r)_{d,m}
\]
which is a quotient of $\Sym(B_d)$. More generally, if $M$ is a $\bk \sV_B$-module, then for $d$ fixed, $\bigoplus_m M_{d,m}$ is a $\Sym(B_d)$-module.

\begin{lemma} \label{lem:free-module}
Fix $d,e,n$. Then 
\[
\bigoplus_{m \ge 0} \cB_\Sigma(-e, -n)_{d,m}
\]
is a free $\Sym(B_d)$-module generated in degree $n$ whose rank is $\dim_\bk \Sym^n(B_{d-e})$.
\end{lemma}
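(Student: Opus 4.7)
The plan is to directly identify $\bigoplus_{m \ge 0} \cB_\Sigma(-e,-n)_{d,m}$ with $\Sym^n(B_{d-e}) \otimes_\bk \Sym(B_d)$ as a $\Sym(B_d)$-module, from which freeness is immediate. The identification of the underlying $\bk$-module is almost read off from the formula stated just above the lemma; the substantive work is matching the module structures.

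First, I would unwind definitions: $\cB_\Sigma(-e,-n)_{d,m} = P_{e,n}^{\bk\sV_B}(d,m) = \hom_{\bk\sV_B}((e,n),(d,m))$. Applying the displayed formula with source $(e,n)$ and target $(d,m)$ gives $\cB_\Sigma(-e,-n)_{d,m} \cong \Sym^n(B_{d-e}) \otimes_\bk \Sym^{m-n}(B_d)$, which vanishes unless $d \ge e$ and $m \ge n$. Summing over $m$ and reindexing with $k = m-n$ produces, as a $\bk$-module,
\[
\bigoplus_{m \ge 0} \cB_\Sigma(-e,-n)_{d,m} \;\cong\; \Sym^n(B_{d-e}) \otimes_\bk \Sym(B_d),
\]
with the copy of $\Sym^n(B_{d-e})$ (the $k=0$ summand) placed in internal degree $m=n$.

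The key step is to verify that the $\Sym(B_d)$-module structure on the left — coming from post-composition in $\bk\sV_B$ by morphisms $(d,m) \to (d,m+k)$, which under $\hom_{\bk\sV_B}((d,m),(d,m+k)) = \Sym^m(B_0) \otimes \Sym^k(B_d) = \Sym^k(B_d)$ is exactly how $\Sym(B_d)$ sits — agrees with multiplication on the second tensor factor of the right-hand side. I would check this by choosing standard representatives: an $\alpha\colon (e,n) \to (d,m)$ with $\alpha_1$ the inclusion $[n] \injects [m]$, so that $\alpha$ is encoded by $(\alpha_3,\alpha_2) \in \Sym^n(B_{d-e}) \otimes \Sym^{m-n}(B_d)$, and a $\beta\colon (d,m) \to (d,m+k)$ with $\beta_1$ the inclusion $[m] \injects [m+k]$ and $\beta_3 \equiv 1 \in B_0$, so that $\beta$ is encoded by $\beta_2 \in \Sym^k(B_d)$. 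The composition formula from Definition~\ref{defn:veronese-cat} then yields $(\beta\alpha)_3 = \alpha_3$ (since the ``multiplications'' are all by the unit $1$) and $(\beta\alpha)_2$ is the concatenation of $\alpha_2$ with $\beta_2$, which in the symmetrized category is the product $\alpha_2 \beta_2 \in \Sym^{m+k-n}(B_d)$.

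The only point requiring mild care is the bookkeeping at the step of passing to these standard representatives in $\bk\sV_B$; I would justify it by noting that every $\Sigma_{m+k}$-orbit on pre-symmetrized morphisms contains a unique representative with $\alpha_1$ the standard order-preserving inclusion, so the above composition computation determines the $\Sym(B_d)$-action completely. Granted this, the module is manifestly free on any $\bk$-basis of $\Sym^n(B_{d-e})$, all generators sit in internal degree $n$, and the rank is $\dim_\bk \Sym^n(B_{d-e})$.
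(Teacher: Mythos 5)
Your proof is correct and takes essentially the same route as the paper: identify the underlying $\bk$-module as $\Sym^n(B_{d-e}) \otimes \Sym(B_d)(-n)$ via the displayed hom formula, then check that post-composition by a morphism $(d,m) \to (d,m+k)$ acts as multiplication on the second tensor factor (the paper simply asserts this ``follows from the definitions''). One small inaccuracy that does not affect the argument: the claim that every $\Sigma_{m+k}$-orbit contains a \emph{unique} representative with $\alpha_1$ the standard inclusion is false (already for $n=0$ every morphism has the standard empty $\alpha_1$, so any orbit of size greater than one has several such), but you only need the existence of such a representative together with the fact that composition descends to coinvariants, which is the content of Lemma~\ref{lem:sym-action}.
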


\begin{proof}
We have
\begin{align*}
\bigoplus_{m \ge 0} \cB_\Sigma(-e, -n)_{d,m} &= \bigoplus_{m \ge 0} (\Sym^{n}(B_{d - e}) \otimes \Sym^{m - n}(B_d)) \\
&= \Sym^{n}(B_{d-e}) \otimes \Sym(B_d)(-n).
\end{align*}
As follows from the definitions, the action of $\Sym(B_d)$ on this space corresponds to the usual multiplication on $\Sym(B_d)(-n)$.
\end{proof}

\begin{theorem}
There is a function $C_B(i,r)$, depending on $i,r,B$, but independent of $d$, such that $\Tor_i^{\Sym(B_d)}(\Sec_{d,r}(B), \bk)$ is concentrated in degrees $\le C_B(i,r)$.
\end{theorem}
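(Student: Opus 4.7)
The plan is to bound the $\Tor$ degrees by constructing a free resolution of the secant quotient \emph{upstairs} in the category $\bk\sV_B$, and then descending to $\Sym(B_d)$-modules by slicing at fixed $d$.

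First, I would regard $M := \cB_\Sigma / \cI_B(r) = P_{0,0}^{\bk\sV_B}/\cI_B(r)$ as a cyclic (hence finitely generated) $\bk\sV_B$-module. By Proposition~\ref{prop:symver-noeth}, every finitely generated $\bk\sV_B$-module is noetherian. Starting from a surjection $F_0 = \bigoplus_j \cB_\Sigma(-e_{0,j},-n_{0,j}) \to M$ from a finite direct sum of principal projectives, I would iteratively construct a free resolution
\[
\cdots \to F_1 \to F_0 \to M \to 0, \qquad F_i = \bigoplus_{j=1}^{g_i} \cB_\Sigma(-e_{i,j}, -n_{i,j}),
\]
using the fact that each kernel is a submodule of a finitely generated module, hence itself finitely generated. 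Crucially, the bidegrees $(e_{i,j}, n_{i,j})$ depend only on $B$, $r$, and $i$ — not on $d$.

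Next, I would exploit the \emph{slicing} functor $F \mapsto F_d := \bigoplus_{m\ge 0} F(d,m)$ from $\bk\sV_B$-modules to $\Sym(B_d)$-modules. The $\Sym(B_d)$-action is supplied by the morphisms $(d,m)\to(d,n)$ in $\bk\sV_B$. Since exactness in $\bk\sV_B$ is pointwise, this slicing functor is exact. Applying it to the above resolution yields a $\Sym(B_d)$-free resolution
\[
\cdots \to (F_1)_d \to (F_0)_d \to \Sec_{d,r}(B) \to 0,
\]
and Lemma~\ref{lem:free-module} identifies each $(\cB_\Sigma(-e_{i,j},-n_{i,j}))_d$ as a free $\Sym(B_d)$-module generated in degree $n_{i,j}$. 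Hence $(F_i)_d$ is a free $\Sym(B_d)$-module generated in degrees at most $C_B(i,r) := \max_{j} n_{i,j}$.

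Finally, since $\Tor_i^{\Sym(B_d)}(\Sec_{d,r}(B), \bk)$ is a subquotient of $(F_i)_d \otimes_{\Sym(B_d)} \bk$, its internal degrees are bounded by $C_B(i,r)$, which depends only on $i$, $r$, and $B$. The main conceptual point — and the one requiring care — is confirming that slicing at a fixed $d$ produces a genuine free resolution of $\Sec_{d,r}(B)$ over $\Sym(B_d)$ (i.e.\ compatible $\Sym(B_d)$-structures, exactness, and freeness of the terms); but once this is in place via Lemma~\ref{lem:free-module}, the bound on $\Tor$ is immediate. No minimality of the resolution is needed since we only want an upper bound on the degrees where $\Tor$ can live.
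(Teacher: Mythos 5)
Your proof is correct and follows essentially the same strategy as the paper: build a finite-rank free resolution over $\bk\sV_B$ (possible by Proposition~\ref{prop:symver-noeth}), slice at fixed $d$ using exactness of pointwise evaluation, and invoke Lemma~\ref{lem:free-module} to read off the generator degrees. The only cosmetic difference is that you resolve the quotient $\cB_\Sigma/\cI_B(r)$ directly, whereas the paper phrases it as resolving the submodule $\cI_B(r)$, but the two are interchangeable and lead to the same bound.
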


\begin{proof}
$\cI_B(r)$ is a finitely generated submodule of $\cB_\Sigma$, and hence has a projective resolution $\cdots\to \bF_i \to \bF_{i-1} \to \cdots \to \bF_0$ such that each $\bF_i$ is a finite direct sum of principal projective modules by Proposition~\ref{prop:symver-noeth}. For $d$ fixed, we get an exact complex of $\Sym(B_d)$-modules
\[
\cdots \to \bigoplus_m (\bF_i)_{d,m} \to \bigoplus_m (\bF_{i-1})_{d,m} \to \cdots \to \bigoplus_m (\bF_0)_{d,m} \to \Sec_{d,r}(B) \to 0.
\]
If $\bF_i = \bigoplus_{j=1}^k \cB_\Sigma(-d_j,-m_j)$, then set $C_B(i,r) = \max(m_1,\dots,m_k)$. In particular, by Lemma~\ref{lem:free-module}, this gives a free resolution which can be used to compute $\Tor_i^{\Sym(B_d)}(\Sec_{d,r}, \bk)$ which we conclude is concentrated in degrees $\le C_B(i,r)$.
\end{proof}

\begin{remark} \label{rmk:delta-mod}
Write $T_{i;d,r}(B) = \Tor_i^{\Sym(B_d)}(\Sec_{d,r}(B), \bk)$. As used above, this is $\bZ$-graded, and we denote the $m$th graded component by $T_{i;d,r}(B)_m$. For fixed $i,m,r$, we get a functor on the full subcategory of $\bk \sV_B$ on objects of the form $(d,m)$ by $(d,m) \mapsto T_{i;d,r}(B)_m$. From the results above, we conclude that this is a finitely generated functor. In particular, as we allow $d$ to vary, this says that $T_{i;d,r}(B)_m$ is ``built out'' of $T_{i;d',r}(B)_m$ where the $d'$ range over some finite list of integers. This can be thought of as the Veronese analogue of $\Delta$-modules in the sense of Snowden (\cite{delta-mod}, \cite[\S 9]{catgb}).
\end{remark}

If $V$ is a vector space and $X \subseteq V$ is a conical subscheme, i.e., an affine scheme whose defining ideal is homogeneous, then we can take $B = \Sym(V) / I_X$ where $I_X$ is the ideal of $X$ (see \cite[\S 5]{secver} for some further discussion of these definitions). We write $\Sec_{d,r}(X)$ instead of $\Sec_{d,r}(\Sym(V)/I_X)$. The above result then specializes to the following result.

\begin{corollary} \label{cor:sec-syz}
Let $X \subseteq V$ be a conical subscheme. There is a function $C_X(i,r)$, depending on $i,r,X$, but independent of $d$, such that the $i$th Tor module of $\Sec_{d,r}(X)$ is concentrated in degrees $\le C_X(i,r)$.
\end{corollary}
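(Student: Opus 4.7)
The plan is to derive Corollary~\ref{cor:sec-syz} as a direct specialization of the preceding theorem, applied to the graded algebra $B = \Sym(V)/I_X$. First I would verify that $B$ satisfies the hypotheses of that theorem: since $X$ is conical, $I_X$ is homogeneous, so $B$ inherits a $\bZ_{\ge 0}$-grading from $\Sym(V)$; $B$ is generated by $B_1$ because $\Sym(V)$ is; and $\dim_\bk B_1 \le \dim_\bk V < \infty$ since $V$ is a finite-dimensional vector space.

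Next, by the convention introduced immediately before the corollary, $\Sec_{d,r}(X) = \Sec_{d,r}(B)$, and the ambient ring $\Sym(B_d)$ against which Tor is computed is exactly the one appearing in the theorem. So the theorem produces a constant $C_B(i,r)$, depending only on $i$, $r$, and $B$ (hence only on $i$, $r$, and $X$, once $V$ is fixed as part of the data of $X$), such that $\Tor_i^{\Sym(B_d)}(\Sec_{d,r}(B),\bk)$ is concentrated in degrees $\le C_B(i,r)$ for every $d$. Setting $C_X(i,r) := C_B(i,r)$ completes the proof.

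There is essentially no genuine obstacle at the level of the corollary itself; the substantive work has been done upstream, first in the preceding theorem and ultimately in Proposition~\ref{prop:symver-noeth} (noetherianity of finitely generated $\bk\sV_B$-modules), which is what guarantees that $\cI_B(r)$ admits a resolution by finite direct sums of principal projectives $\cB_\Sigma(-d_j,-m_j)$ with the shifts $m_j$ uniformly bounded. Lemma~\ref{lem:free-module} then converts each such principal projective, for $d$ fixed, into a free $\Sym(B_d)$-module generated in a degree independent of $d$, and the bound $C_X(i,r) = \max_j m_j$ over the $i$th step of the resolution is the promised constant.
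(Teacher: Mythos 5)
Your argument is correct and matches the paper exactly: the corollary is stated as a direct specialization of the preceding theorem to $B = \Sym(V)/I_X$, and the paper gives no separate proof beyond the sentence ``The above result then specializes to the following result.'' Your additional verifications (that $B$ is generated in degree $1$ with $\dim_\bk B_1 < \infty$ because $X$ is conical and $V$ is finite-dimensional) are the right hypotheses to check and are implicit in the paper's framing.
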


The case $i=1$ refers to the minimal equations generating the ideal of $\Sec_{d,r}(X)$, so Corollary~\ref{cor:sec-syz} generalizes \cite[Theorem 5.1]{secver}.

For the next proof, we use Schur functors $\bS_\lambda$ which are indexed by integer partitions (see \cite[\S 5]{expos} and the references given there). In particular, $\bS_d$ is the $d$th symmetric power. We use $\ell(\lambda)$ to denote the number of nonzero parts of $\lambda$ (for symmetric powers, $\ell=1$ since $(d)$ just has one entry). If $\bS_\nu \subseteq \bS_\lambda \otimes \bS_\mu$, then $\ell(\nu) \le \ell(\lambda) + \ell(\mu)$ \cite[\S 9.1]{expos}.

\begin{theorem} \label{thm:projspace}
In the case $X = V$, the function $C_X(i,r)$ is independent of $\dim V$ once $\dim V \ge r+i$. In particular, there is a bound that works for all $V$ simultaneously.
\end{theorem}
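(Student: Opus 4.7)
The plan is to bound the $\mathbf{GL}(V)$-Schur length of the Tor modules by $r+i$, after which the theorem falls out of polynomial-functor stabilization. Concretely, if every Schur constituent $\bS_\lambda V$ of the polynomial functor $V \mapsto \Tor_i^{\Sym(\Sym^d V)}(\Sec_{d,r}(V),\bk)$ satisfies $\ell(\lambda) \le r+i$, then in characteristic $0$ this functor is determined by its value on any $V$ with $\dim V \ge r+i$ (every irreducible constituent contributes nontrivially, so the multiplicities can be read off from $F(V)$); in particular, the set of graded degrees $m$ at which it is nonzero does not depend on $V$ once $\dim V \ge r+i$. Combined with the $d$-independent bound from Corollary~\ref{cor:sec-syz}, this yields the uniform $C(i,r)$.

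First I would establish, by induction on $r$, that every Schur constituent $\bS_\lambda V$ of $\Sec_{d,r}(V)_m = \Sym^m(\Sym^d V)/\cI_B(r)_{d,m}$ satisfies $\ell(\lambda) \le r$. The base case $r=1$ is immediate, since by definition $\cI_B(1)_{d,m}$ is the kernel of the multiplication $\Sym^m(\Sym^d V) \to \Sym^{dm}V = \bS_{(dm)}V$ and the image has length $1$. For the inductive step, the defining comultiplication embeds $\Sec_{d,r}(V)_m$ as a $\mathbf{GL}(V)$-subrepresentation of
\[
\bigoplus_{i=0}^m \bigl(\Sym^i(\Sym^d V)/\cI_B(1)_{d,i}\bigr) \otimes_\bk \bigl(\Sym^{m-i}(\Sym^d V)/\cI_B(r-1)_{d,m-i}\bigr),
\]
whose summands are tensor products of modules of lengths $\le 1$ and $\le r-1$ respectively; the bound $\ell(\nu) \le \ell(\lambda) + \ell(\mu)$ for $\bS_\nu \subseteq \bS_\lambda \otimes \bS_\mu$ recalled before the theorem then gives length $\le r$.

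Next I would pass from $\Sec_{d,r}(V)$ to $\Tor_i$ using the Koszul resolution of $\bk$ over $\Sym(\Sym^d V)$: the Tor module is the $i$-th homology of the $\mathbf{GL}(V)$-equivariant complex $\Sec_{d,r}(V) \otimes_\bk \Lambda^\bullet(\Sym^d V)$, hence a $\mathbf{GL}(V)$-subquotient of $\Sec_{d,r}(V) \otimes_\bk \Lambda^i(\Sym^d V)$. Since $\Sym^d V = \bS_{(d)}V$ has length $1$, its $i$-th exterior power embeds into $(\Sym^d V)^{\otimes i}$ and thus has length $\le i$; combined with the previous step and the same tensor-length bound, the Tor module has length $\le r+i$, completing the plan.

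The main technical step is the inductive length bound on $\Sec_{d,r}(V)$ itself; once that is in place, the Koszul description of $\Tor$ and the appeal to polynomial-functor stabilization are essentially representation-theoretic bookkeeping.
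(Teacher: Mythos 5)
Your proof is correct and shares the paper's overall skeleton (Schur length $\le r$ for $\Sec_{d,r}(V)$, then $+i$ from the Koszul complex, then polynomial-functor stabilization to conclude independence of $\dim V$ once $\dim V \ge r+i$), but it obtains the key length bound on $\Sec_{d,r}(V)$ by a genuinely different route. The paper deduces $\ell(\lambda)\le r$ from the geometric fact that $\Sec_{d,r}(V)$ is contained in a subspace variety, citing \cite[Remark 5.2]{secver}. You instead prove it self-containedly by induction on $r$ directly from the defining comultiplication of $\cI_B(r)$: the base case is that $\Sym^m(\Sym^d V)/\cI_B(1)_{d,m}$ injects into $\Sym^{dm}V$ and so has length $\le 1$, and the inductive step embeds $\Sec_{d,r}(V)_m$ into a direct sum of tensor products of a length-$\le 1$ factor with a length-$\le(r-1)$ factor, then applies the subadditivity $\ell(\nu)\le\ell(\lambda)+\ell(\mu)$. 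Your version buys a more elementary, purely algebraic derivation that stays entirely within the definitions already in this paper rather than outsourcing the crucial input to a geometric statement about subspace varieties; the paper's version is shorter by one induction. The remainder of your argument (Koszul resolution, $\ell(\bigwedge^i\Sym^d V)\le i$ via the embedding into $(\Sym^d V)^{\otimes i}$, and the stabilization fact the paper cites as \cite[Corollary 9.1.3]{expos}) coincides with the paper's.
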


\begin{proof}
$\Sec_{d,r}(V)$ is a direct sum of Schur functors $\bS_\lambda(V^*)$ with $\ell(\lambda) \le r$ since it is contained in a subspace variety (see \cite[Remark 5.2]{secver}). The $i$th Tor module of $\Sec_{d,r}(V)$ is the $i$th homology of the Koszul complex on $\Sym^d(V^*)$ tensored (over $\Sym(\Sym^d(V^*))$) with $\Sec_{d,r}(V)$, so is a subquotient of $\bigwedge^i(\Sym^d(V^*)) \subset (\Sym^d(V^*))^{\otimes i}$ tensored (over $\bk$) with $\Sec_{d,r}(V)$. So all Schur functors $\bS_\mu(V^*)$ that appear in the $i$th Tor module satisfy $\ell(\mu) \le r+i$ by the subadditivity of $\ell$ mentioned above. In particular, no information is lost by specializing to the case $\dim V = r+i$ \cite[Corollary 9.1.3]{expos}.
\end{proof}

\begin{remark}
The ideal of the Veronese embedding of any projective space has a quadratic Gr\"obner basis (for example, see \cite[Proposition 17]{ERT}). By semicontinuity of Tor with respect to taking initial ideals \cite[Theorem 8.29]{CCA} and the basic properties of the Taylor resolution for monomial ideals \cite[\S 6.1]{CCA}, this implies that $C_V(i,1) \le 2i$ for all $i$.
\end{remark}

\begin{example} \label{eg:rnc}
Consider the case of $B = \bk[s,t]$. Let $x_i = s^i t^{4-i}$ for $i=0,\dots,4$ and $y_j = s^j t^{5-j}$ for $j=0,\dots,5$, so that $x_0, \dots, x_4$ and $y_0, \dots, y_5$ are the coordinates of the $4$th and $5$th Veronese embeddings of $\bP^1$. In each case, the ideal of the second secant variety is generated by the $3 \times 3$ minors of the catalecticant matrices 
\[
X = \begin{pmatrix} x_0 & x_1 & x_2 \\ x_1 & x_2 & x_3 \\ x_2 & x_3 & x_4 \end{pmatrix}, \qquad
Y = \begin{pmatrix} y_0 & y_1 & y_2 & y_3 \\ y_1 & y_2 & y_3 & y_4 \\ y_2 & y_3 & y_4 & y_5 \end{pmatrix} 
\]
(see \cite[Exercise 6.2]{eisenbud-syzygies}). Consider the following preimage of $\det X$ in $B_4^{\otimes 3}$ (we omit the $\otimes$ symbol):
\[
x_0 x_2 x_4 - x_0 x_3 x_3 + x_1 x_2 x_3 - x_1 x_1 x_4 + x_2 x_1 x_3 - x_2 x_2 x_2.
\]
If we multiply this by $tst \in B_1^{\otimes 3}$ (in the categorical language, apply the morphism $\alpha \colon (4,3) \to (5,3)$ where $\alpha_3 \colon [3] \to B_1$ is the function $1 \mapsto t$, $2 \mapsto s$, $3 \mapsto t$), we get the following element in $B_5^{\otimes 3}$:
\[
y_0 y_3 y_4 - y_0 y_4 y_3 + y_1 y_3 y_3 - y_1 y_2 y_4 + y_2 y_2 y_3 - y_2 y_3 y_2.
\]
If we symmetrize this, the corresponding element in $\Sym^3(B_5)$ is not in the ideal generated by the $3 \times 3$ minors of $Y$. On the other hand, if we multiply it by $stt+tst+tts \in (B_1^{\otimes 3})^{\Sigma_3}$, the result is in the ideal generated by the $3 \times 3$ minors of $Y$ (which follows from \cite[Proposition 4.3]{secver}). Both of these statements can be checked with Macaulay2 \cite{M2}:
\begin{verbatim}
R=QQ[y0,y1,y2,y3,y4,y5];
I=minors(3, matrix{{y0,y1,y2,y3}, {y1,y2,y3,y4}, {y2,y3,y4,y5}});
f1=y0*y3*y4 - y0*y4*y3 + y1*y3*y3 - y1*y2*y4 + y2*y2*y3 - y2*y3*y2; 
f2=y1*y2*y4 - y1*y3*y3 + y2*y2*y3 - y2*y1*y4 + y3*y1*y3 - y3*y2*y2;
f3=y0*y2*y5 - y0*y3*y4 + y1*y2*y4 - y1*y1*y5 + y2*y1*y4 - y2*y2*y3;
I+ideal(f1) == I --false
I+ideal(f1+f2+f3) == I --true
\end{verbatim} 
\end{example}

\section{Complements}

As in \cite{secver}, we can also prove multi-graded versions of the above results. To be precise, let $B = \bigoplus_{\bd \in \bZ_{\ge 0}^\ell} B_\bd$ be a multi-graded ring which is generated by the $B_\bd$ with $|\bd|=1$. It is straightforward to generalize the definitions of the categories $\bk\cV_B$, $\bk\cV_B^\Sigma$, $\bk\sV_B$ by allowing the objects to be $(\bd, m) \in \bZ_{\ge 0}^\ell \times \bZ_{\ge 0}$ and proving analogous results as above. Since the notation becomes more messy, we will leave this to the interested reader.

The ensuing results can be stated as follows. Define a $\bk\sV_B$-module $\cI_B(1)$ by setting $\cI_B(1)_{\bd,m}$ to be the kernel of the multiplication map $\Sym^m(B_\bd) \to B_{m\bd}$ and inductively define the secant ideals $\cI_B(r)$ by setting $\cI_B(r)_{\bd,m}$ to be the kernel of the comultiplication map
\[
\Sym^m(B_\bd) \to \bigoplus_{i=0}^m \Sym^i(B_\bd) / \cI_B(1)_{\bd,i} \otimes_\bk \Sym^{m-i}(B_\bd) / \cI_B(r-1)_{\bd,m-i}.
\]
As usual, these are $\bk\sV_B$-submodules of the principal projective $P_{0,0}$, and we define 
\[
\Sec_{\bd, r}(B) = \bigoplus_{m \ge 0} \Sym^m(B_\bd) / \cI_B(r)_{\bd, m}.
\]

\begin{theorem}
There is a function $C_B(i,r)$, depending on $i,r,B$, but independent of $\bd$, such that $\Tor_i^{\Sym(B_\bd)}(\Sec_{\bd,r}(B), \bk)$ is concentrated in degrees $\le C_B(i,r)$.
\end{theorem}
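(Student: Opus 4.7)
The plan is to imitate the proof of the single-graded Theorem verbatim, with $\bd \in \bZ_{\ge 0}^\ell$ replacing the scalar $d$ throughout. The three ingredients I need are (i) a noetherianity statement for the multi-graded category $\bk\sV_B$ (so that $\cI_B(r)$ admits a resolution by finite sums of principal projectives), (ii) a multi-graded analogue of Lemma~\ref{lem:free-module} (to recognize the $\bd$-slice of each principal projective as a free $\Sym(B_\bd)$-module generated in a controlled degree), and (iii) the observation that $\Sec_{\bd,r}(B)$ arises as the $\bd$-slice of the cokernel of the first map in such a resolution.

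For (i), the categorical setup sketched in the opening paragraph of this section is identical to that of Section 2 except that the bookkeeping for the ``degree'' variable now lives in $\bZ_{\ge 0}^\ell$ rather than $\bZ_{\ge 0}$. The Gr\"obner-category argument of Proposition~\ref{prop:ver-noeth} goes through unchanged after replacing the sequence alphabet $\Sigma$ by its analogue $\coprod_{\be \ge 0} \multi{r}{\be} \amalg \coprod_{\be \ge \bd} \multi{r}{\be - \bd}$ (or its replacement for general $B$); since finite products of noetherian posets are noetherian and $\bZ_{\ge 0}^{\ell \cdot r}$ is noetherian by Dickson's lemma, Higman's lemma applies and the analogue of Proposition~\ref{prop:symver-noeth} follows by the same averaging argument over $\Sigma_n$. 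In particular $\cI_B(r)$, being a finitely generated submodule of the principal projective $P_{0,0}$ (the proof of \cite[Proposition 4.3]{secver} is purely formal in the algebraic operations and survives the multi-grading), admits a resolution $\cdots \to \bF_1 \to \bF_0 \to \cI_B(r) \to 0$ in which each $\bF_i$ is a finite direct sum $\bigoplus_{j=1}^{k_i} \cB_\Sigma(-\be_{i,j}, -m_{i,j})$ of principal projectives.

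For (ii), fix $\bd, \be \in \bZ_{\ge 0}^\ell$ and $n \in \bZ_{\ge 0}$. If $\be \not\le \bd$ componentwise then $\cB_\Sigma(-\be,-n)_{\bd,m} = 0$ for every $m$ (there is no morphism $(\be,n) \to (\bd,m)$ in the multi-graded Veronese category). Otherwise, the same identification as in Lemma~\ref{lem:free-module} gives
\[
\bigoplus_{m \ge 0} \cB_\Sigma(-\be,-n)_{\bd,m} \;=\; \Sym^n(B_{\bd - \be}) \otimes_\bk \Sym(B_\bd)(-n),
\]
a free $\Sym(B_\bd)$-module generated in degree $n$ of rank $\dim_\bk \Sym^n(B_{\bd-\be})$, with the $\Sym(B_\bd)$-action being the obvious one. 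This is a routine unwinding of the multi-graded definition of $\bk\sV_B$.

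For (iii), slicing the above resolution at the $\bd$-coordinate produces an exact complex of $\Sym(B_\bd)$-modules whose term in homological degree $i$ is a finite direct sum of free modules, each generated in one of the degrees $m_{i,1},\dots,m_{i,k_i}$ (some summands may be zero, but that only helps). Setting
\[
C_B(i,r) \;=\; \max\bigl(m_{i,1},\ldots,m_{i,k_i}\bigr),
\]
which depends only on $i$, $r$, and $B$ and not on $\bd$ since the resolution $\bF_\bullet$ is chosen before specialization, we obtain a free resolution of $\Sec_{\bd,r}(B)$ over $\Sym(B_\bd)$ concentrated in degrees $\le C_B(i,r)$ in homological degree $i$, which bounds $\Tor_i^{\Sym(B_\bd)}(\Sec_{\bd,r}(B),\bk)$ as claimed. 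The only real work is therefore ingredient (i), and within it the only substantive point is checking that the Gr\"obner-category conditions of \cite[Theorem 4.3.2]{catgb} still hold after enlarging the poset of ``degrees''; everything else is notational.
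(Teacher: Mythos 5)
Your proposal is correct and follows exactly the route the paper intends: the paper states the multi-graded theorem without proof, remarking only that one should ``generalize the definitions... and prove analogous results as above,'' and your three ingredients (multi-graded noetherianity of $\bk\sV_B$ via the same Gr\"obner/Higman poset argument, the multi-graded version of Lemma~\ref{lem:free-module}, and slicing a projective resolution of $\cI_B(r)$ at each fixed $\bd$) are precisely the pieces needed to replicate the proof of the single-graded theorem. The one point you flag that the paper leaves implicit --- that summands $\cB_\Sigma(-\be,-n)_{\bd,m}$ vanish when $\be \not\le \bd$ --- is correct and harmless, as you note.
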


A special case of this arises by considering a projective scheme $X$ and line bundles $\cL_1, \dots, \cL_\ell$ on $X$. We get a multi-graded ring $\bigoplus_{d_1, \dots, d_\ell \ge 0} \rH^0(X; \cL_1^{\otimes d_1} \otimes \cdots \otimes \cL_\ell^{\otimes d_\ell})$ and we can take $B$ to be the subring generated by $\rH^0(X; \cL_i)$ for $i=1,\dots,\ell$.

As an even more special case of this, we can consider partial flag varieties. Let $\be = (e_1 < e_2 < \cdots < e_\ell)$ be an increasing sequence of positive integers. Given a vector space $V$, let $\bF(\be, V)$ be the partial flag variety of type $\be$: its points are increasing sequences of subspaces $W_1 \subset \cdots \subset W_\ell \subset V$ where $\dim W_i = e_i$. Its Picard group is isomorphic to $\bZ^\ell$, generated by line bundles $\cL_1, \dots, \cL_\ell$ (corresponding to the fundamental weights $\omega_{e_1}, \dots, \omega_{e_\ell}$ of $\SL(V)$); set $\cL(\bd) := \cL_1^{\otimes d_1} \otimes \cdots \otimes \cL_\ell^{\otimes d_\ell}$. When $\be = (1)$, $\bF(\be,V)$ is projective space. Define $\Sec_{\bd,r}(\bF(\be,V))$ to be the $r$th secant variety of the image of $\bF(\be,V)$ under the map defined by the line bundle $\cL(\bd)$.

\begin{theorem} \label{thm:flag}
There is a function $C_\be(i,r)$, depending on $\be, i, r$, but independent of $\bd$, such that the $i$th Tor module of $\Sec_{\bd,r}(\bF(\be,V))$ is concentrated in degrees $\le C_\be(i,r)$. In fact, $C_\be(i,r)$ is also independent of $\dim V$ once $\dim V \ge r e_\ell + i$.
\end{theorem}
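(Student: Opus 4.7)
The plan is to combine the multi-graded version of the preceding theorem with the Schur functor stability argument from Theorem~\ref{thm:projspace}.

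For the $\bd$-independence, I would take $B = \bigoplus_{\bd \in \bZ_{\ge 0}^\ell} \rH^0(\bF(\be,V); \cL(\bd))$ as suggested by the discussion just before the theorem. This multi-graded ring is generated by the finite-dimensional spaces $B_{\be_i} = \rH^0(\bF(\be,V); \cL_i)$, so the preceding theorem applied to this $B$ yields a function $C_\be(i,r)$, depending on $i,r,\be,V$ but independent of $\bd$, that bounds the degrees in which $\Tor_i^{\Sym(B_\bd)}(\Sec_{\bd,r}(\bF(\be,V)), \bk)$ is concentrated.

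For the $\dim V$-independence, I would imitate Theorem~\ref{thm:projspace}: bound the lengths of the Schur functors $\bS_\mu V^*$ which can appear in the $i$th Tor module, then invoke \cite[Corollary 9.1.3]{expos} to conclude that nothing is lost by restricting $\dim V$. By the Borel--Weil theorem, $B_\bd \cong \bS_{\lambda(\bd)} V^*$ where $\lambda(\bd) = \sum_k d_k \omega_{e_k}$, so $\ell(\lambda(\bd)) \le e_\ell$. The image of $\bF(\be,V)$ under $\cL(\bd)$ is the closure of the $\SL(V)$-orbit of a highest weight vector, which is supported on the standard $e_\ell$-dimensional flag, hence lies in $\bS_{\lambda(\bd)}(W^*)$ for some $e_\ell$-dimensional $W \subset V$. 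Summing $r$ such points shows $\Sec_{\bd,r}(\bF(\be,V))$ is contained in the corresponding subspace variety, whose coordinate ring only involves Schur functors $\bS_\mu V^*$ with $\ell(\mu) \le r e_\ell$.

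Finally, the $i$th Tor module, computed via the Koszul complex on $B_\bd$, is a subquotient of $\wedge^i(B_\bd) \otimes_\bk \bk[\Sec_{\bd,r}(\bF(\be,V))]$; combining the bound $\ell(\mu) \le r e_\ell$ for the coordinate ring with a length estimate for Schur components of $\wedge^i(B_\bd)$ should produce a bound matching $r e_\ell + i$ in the statement, after which \cite[Corollary 9.1.3]{expos} finishes the proof. The most delicate point I anticipate is securing the sharp length estimate: the naive subadditivity $\ell(\nu) \le \ell(\lambda) + \ell(\mu)$ applied to $\wedge^i(\bS_{\lambda(\bd)} V^*) \subseteq B_\bd^{\otimes i}$ yields only $\ell(\nu) \le i \cdot e_\ell$, and hence the weaker total bound $(r+i) e_\ell$; matching the sharper $r e_\ell + i$ will require a more careful argument, perhaps by working with the minimal free resolution over the subspace variety (using the structure established in Step 1) rather than with the Koszul complex itself, or by exploiting specific features of the secant ideal.
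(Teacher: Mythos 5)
Your approach matches the paper's: use the multi-graded version of the preceding theorem for $\bd$-independence, then use Schur-functor length bounds together with \cite[Corollary 9.1.3]{expos} for $\dim V$-independence. The one cosmetic difference is that for the bound $\ell(\lambda) \le r e_\ell$ on the coordinate ring of $\Sec_{\bd,r}(\bF(\be,V))$ you argue directly via containment in a subspace variety, whereas the paper cites \cite[Proposition 5.6]{manivel-michalek}; both are fine.

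More importantly, you have correctly flagged a real issue that the paper glosses over. The paper's proof of the $\dim V$-independence consists of a single sentence deferring to Theorem~\ref{thm:projspace}, but that argument (Tor as a subquotient of $\wedge^i(B_\bd)\otimes \Sec_{\bd,r}$, then subadditivity of $\ell$ on tensor factors) only gives $\ell(\mu) \le i\,e_\ell + r\,e_\ell = (r+i)\,e_\ell$, since here $B_\bd = \bS_{\lambda(\bd)}(V^*)$ has $\ell(\lambda(\bd))$ as large as $e_\ell$ rather than $1$. Note that the two thresholds coincide when $e_\ell = 1$ (the projective space case), which is probably why the discrepancy was easy to overlook. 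The claimed sharper threshold $r e_\ell + i$ does not follow from the template argument, and the paper supplies no further justification. Your instinct that a more careful argument would be needed to reach $r e_\ell + i$ --- or that one should simply accept the weaker $(r+i) e_\ell$ --- is sound; if you were submitting this as a complete proof you would need to either replace $r e_\ell + i$ by $(r+i) e_\ell$ in the statement, or actually produce the finer argument you sketch.
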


As before, \cite[Theorem 1.3]{secver} is the case $i=1$ of this theorem.

\begin{proof}
The only thing that has not already been explained is the independence of $C_\be(i,r)$ from $\dim V$. This is similar to the proof of Theorem~\ref{thm:projspace}, but we use instead the statement that the Schur functors $\bS_\lambda$ appearing in the coordinate ring satisfy $\ell(\lambda) \le r e_\ell$ \cite[Proposition 5.6]{manivel-michalek}.
\end{proof}

As in \cite{secver}, we can also take finite products of projective schemes. This is essentially covered by the case of multi-graded rings already. One slightly new case would be to take finite products of partial flag varieties and getting independence of the bounds from the dimensions of the vector spaces involved. These are the higher Tor group analogues of the results in \cite[\S 6]{secver}.

\end{document}